\DeclareMathOperator*{\argmin}{arg\,min}
\DeclareMathOperator*{\argmax}{arg\,max}
\newtheorem{thm}{\bf \noindent Theorem}
\newtheorem{lem}{\bf \noindent Lemma}
\newtheorem{prop}{\bf \noindent Proposition}
\newcommand{\R}{\mathbb R}
\newcommand{\todo}[1]{}
\newcommand{\bsd}{\boldsymbol{\delta}}
\newcommand{\tbx}{\tilde{\mathbf{x}}}
\newcommand{\tbu}{\tilde{\mathbf{u}}}
\newcommand{\bl}{\boldsymbol{\lambda}}
\newcommand{\bx}{ {\bf x} }
\newcommand{\hbx}{\hat{\mathbf x}}
\newcommand{\by}{ \tilde{\bf x} }
\newcommand{\bs}{ {\bf s} }
\newcommand{\br}{ {\bf r} }
\newcommand{\bb}{ {\bf b} }
\newcommand{\ba}{ {\bf A} }
\newcommand{\X}{ \mathcal{X} }
\newcommand{\I}{ \mathcal{I} }
\newcommand{\Fc}{ \mathcal{F} }
 \newcommand{\suml}{\sum\nolimits}
\newcommand{\U}{\mathcal{U}}
\newcommand{\V}{\mathcal{V}}
\newcommand{\C}{\mathcal{C}}
\newcommand{\TC}{\tilde{\mathcal{C}}}
\newcommand{\bu}{\mathbf{u}}
\newcommand{\bB}{\mathbf{B}}
\def\0b{\mathbf{0}}
\def\bs{\mathbf{s}}
\newcommand{\la}{\langle}
\newcommand{\ra}{\rangle}
\newcommand{\sml}[1]{{#1}}
\newcommand\Item[1][]{%
  \ifx\relax#1\relax  \item \else \item[#1] \fi
  \abovedisplayskip=0pt\abovedisplayshortskip=0pt~\vspace*{-\baselineskip}}
\begin{document}

\title{Complexity Certification of a Distributed Augmented Lagrangian Method}

\author{Soomin~Lee*,~\IEEEmembership{Member,~IEEE,}
Nikolaos~Chatzipanagiotis,~\IEEEmembership{Member,~IEEE,}
and~Michael~M.~Zavlanos,~\IEEEmembership{Member,~IEEE}
\thanks{Soomin Lee is with the Dept. of Industrial and Systems Engineering, Georgia Tech, Atlanta, GA, 30318, USA, {\tt\footnotesize soomin.lee@isye.gatech.edu}.
Nikolaos Chatzipanagiotis and Michael M. Zavlanos are with the Dept. of Mechanical Engineering and Materials Science, Duke University, Durham, NC, 27708, USA, {\tt\footnotesize \{n.chatzip,michael.zavlanos\}@duke.edu}. This work is supported by NSF under grant CNS \#1261828, and by ONR under grant \#N000141410479.
}}

\maketitle

\begin{abstract}
In this paper we present complexity certification results for a distributed Augmented Lagrangian (AL) algorithm
used to solve convex optimization problems involving globally coupled linear constraints.
Our method relies on the Accelerated Distributed Augmented Lagrangian (ADAL) algorithm,
which can handle the coupled linear constraints in a distributed manner based on local estimates of the AL.
We show that
the theoretical complexity of ADAL
to reach an $\epsilon$-optimal solution both in terms of suboptimality and infeasibility is $O(\frac{1}{\epsilon})$ iterations. Moreover, we provide a valid upper bound for the optimal dual multiplier which enables us to explicitly specify these complexity bounds.
We also show how to choose the stepsize parameter to minimize the bounds on the convergence rates.
Finally, we discuss a motivating example, a model predictive control (MPC) problem, involving a finite number of subsystems which interact with each other via a general network.
\end{abstract}

\begin{IEEEkeywords}
Augmented Lagrangian methods, computational complexity,
distributed model predictive control.
\end{IEEEkeywords}

\section{Introduction}\label{sec_introduction}

\IEEEPARstart{D}{istributed} optimization methods
decompose large-scale problems into more manageable subproblems that can be efficiently solved in parallel. Moreover, distributed algorithms allow for better load balancing among the available computational resources (inexpensive devices or subsystems) and they also
alleviate drawbacks of centralized systems, such as the cost, fragility, and privacy associated with centralized coordination.
For this reason, they  are widely  used to solve  large-scale problems arising in areas as diverse as  optimal control, wireless communications,  machine learning, computational biology, finance and statistics, to name a few.

Classic decomposition algorithms utilize the separable structure of the dual function.
These methods have low computational cost, but they suffer from slow convergence due to the non-differentiability of the dual functions induced by the ordinary Lagrangian \cite[Chapter 2.6]{Berts1}.
Although this drawback can be avoided by using the Augmented Lagrangian (AL) framework \cite[Chapter 2.1]{Bert_Constrained},
AL based methods lose the decomposable structure of the ordinary Lagrangian,
which makes distributed computation difficult.
This calls for the development of specialized AL decomposition techniques.

Early specialized techniques that allow for decomposition of the AL can be traced back to the works \cite{Tatjewski,Watanabe,Teboulle}. More recent literature involves the \emph{Diagonal Quadratic Approximmation} (DQA) algorithm  \cite{Mulvey,Rus} and the \emph{Alternating Direction Method of Multipliers} (ADMM) \cite{Eck_DR,Eck_Monotrop,Boyd_ADMM}.
The DQA method replaces each minimization step in the augmented Lagrangian algorithm   by a separable approximation of the AL function. The ADMM methods are based on the relations between splitting methods for monotone operators, such as Douglas-Rachford splitting, and the proximal point algorithm  \cite{Glow,Eck_DR}.
Recently, the convergence rate of ADMM has been studied extensively;
see e.g. \cite{WYin2015} and references therein.
Most of these results assume either smoothness, strong convexity, or
strict convexity of the objective function.
Although the results in \cite{He2012,He2015} do not require such properties, the convergence rates are given either in terms of the violation of optimality conditions \cite{He2012}  or the relative change in consecutive iterates \cite{He2015}.

The contributions of this paper are the following:

\noindent 1)  We revisit the general purpose AL method ADAL, first developed for convex optimization problems \cite{Nikos_math_prog,Nikos_ACC2015} and later extended to non-convex problems \cite{Nikos_nonconvexADAL} and problems with noise \cite{Nikos_SADAL}, which relies on local estimates of the AL to handle globally coupled linear constraints in a distributed manner.
We provide computational complexity certifications
for the ADAL method in terms of primal suboptimality and primal infeasibility.
Specifically, we show that
the number of iterations
to reach an $\epsilon$-optimal and $\epsilon$-feasible solution
is $O(\frac{1}{\epsilon})$, under the assumption that the objective function is generally convex and not necessarily differentiable.
This analysis can benefit many practical applications, such as model predictive control (MPC), one of the most successful control frameworks implemented on embedded systems.
As the sampling times for embedded systems are very short,
any iterative optimization algorithm implemented on such systems must be able to precondition the execution time  by providing an explicit number of iterations needed to obtain a reasonably good solution in terms of suboptimality and infeasibility.
For this reason, there has been a growing interest recently in enhancing MPC methods by providing the worst-case computational complexity \cite{RichterConf,RichterTAC,PatrinosTAC,PontusThesis,QuocInexact}.

\noindent 2) Since the complexity bounds above depend on the optimal dual multiplier $\bl^*$, we provide a valid upper bound for $\bl^*$.
Our bound holds for any general convex problems with Lipschitz gradients involving linear constraints.
Tighter bounds for quadratic problems have been studied in \cite{RichterConf,RichterTAC,PatrinosTAC}.

\noindent 3) We show how to select the algorithm parameter $\rho$, which is the stepsize used in the dual gradient step.
To the best of our knowledge, such parameter selection has been  studied
only when the objective function is quadratic or has special properties like strong convexity and smoothness \cite{PontusMetric,admmParam}.


\section{Accelerated Distributed AL\label{sec_alg}}
This section describes the Accelerated Distributed Augmented Lagrangian (ADAL for short)
method, a specialized \emph{Augmented Lagrangian} (AL) decomposition technique which was proposed in \cite{Nikos_math_prog},
for solving optimization problems of the form:
\begin{align}\label{P}
\min_{\bx_i}  ~& ~ \sum\nolimits_{i=1}^N f_i(\bx_i) \notag\\
\text{subject to } ~&~ \sum\nolimits_{i=1}^N \ba_i \bx_i = \bb, \\
 &~~ {\bx}_i \in \mathcal{X}_i,\quad i=1,2,\dots,N,\notag
\end{align}
where $\bx_i \in \mathbb{R}^{n_i}$ denotes the decision variables that belong to subsystem $i$,
and $f_i:\mathbb{R}^{n_i} \to \mathbb{R}$ is its local objective function.
Problem \eqref{P} models situations where a set $\mathcal{I}=\{ 1,2,\dots,N\}$ of decision makers, henceforth referred to as agents, need to determine local decisions $\bx_i\in\mathcal{X}_i$ that minimize the summation of the local functions $f_i(\bx_i)$, while respecting a set of affine coupling constraints $\sum_{i=1}^N \ba_i \bx_i = \bb$.
Here, we assume the functions $f_i: \mathbb{R}^{n_i}\rightarrow \mathbb R$ are convex (not necessarily differentiable)
for all $i\in\mathcal{I}$, the local sets $\mathcal{X}_{i}\subseteq \R^{n_i}$ for $i\in\mathcal{I}$ are convex, closed and bounded, $\ba_i \in \mathbb{R}^{m\times n_i}$, $\bb \in \mathbb{R}^m$, and $n=\sum_{i=1}^N n_i$.

Furthermore, we let
\[
F({\bx}) :=  \sum\nolimits_{i=1}^{N} f_i({\bx}_i),
\]
where $\mathbf{x} = [ \bx_1^\top,\dots, {\bx}_N^\top]^\top \in \mathbb{R}^n$. Denoting $\ba = [ \ba_1 \dots \ba_N ] \in\R^{m\times n}$,  the constraint $\sum_{i=1}^{N} {\ba}_i {\bx}_i = {\bb}$ in problem \eqref{P} becomes $\ba \bx = \bb$.
Also, we define the maximum degree $q$ as a measure of sparsity of the matrix $\ba$, i.e.,
for each constraint $j = 1, \ldots, m$, we denote by $q_j$ the number of all $i \in \I$ such that $[\ba_i]_j \neq \0b$, where $[\ba_i]_j$ is the $j$-th row of matrix $\ba_i$ and $\0b$ stands for a vector of all zeros.
Then, $q$ is defined as:
\begin{align}\label{eqn:q}
q = \max_{j = 1,\ldots,m} q_j.
\end{align}
It will be shown below that $q$ plays a critical role in the convergence properties of the
proposed method.

\subsection{Preliminaries: AL Framework}
Associating Lagrange multipliers $\bl\in\R^m$ with the affine constraint $\ba \bx = \bb$, the Lagrangian for \eqref{P}  is defined as
\begin{align}\label{ord_lagr}
L(\bx,\bl) &= F(\bx) + \langle \bl, \ba \bx -\bb\rangle
= \sum\nolimits_{i=1}^{N} L_i (\bx_i,\bl) -\langle \bb,\bl\rangle,
\end{align}
where $L_i (\bx_i,\bl) = f_i({\bf x}_i) + \langle  \bl , {\ba}_i {\bx}_i\rangle$, and $\langle\cdot,\cdot\rangle$ denotes inner product. Then, the dual function is defined as
\begin{equation}\label{dual_functional}
g(\bl) = \inf_{\bx\in\mathcal{ X}} ~L(\bx,\bl) = \sum\nolimits_{i=1}^{N} g_i(\bl) -\langle \bb,\bl\rangle,
\end{equation}
where $\mathcal{X}= \mathcal{X}_{1}\times\mathcal{X}_{2}\dots\times\mathcal{X}_{N}$, and
\begin{equation*}\label{dual_functionals}
g_i(\bl) = \inf_{\bx_i\in\mathcal{ X}_i} \Big[  f_i({\bf x}_i) + \langle  \bl , {\bf A}_i {\bf x}_i\rangle \Big].
\end{equation*}
The dual function is decomposable with respect to $\bx_i$'s and this gives rise to decomposition methods that address the  \emph{dual problem} \cite[Chapter 2.6]{Berts1}
\begin{equation}\label{D}
\max_{\bl\in\mathbb{R}^m} \, \sum\nolimits_{i=1}^{N} g_i(\bl) -\langle \bb,\bl\rangle.
\end{equation}

Such dual methods suffer from well-documented disadvantages, the most notable one being their exceedingly slow convergence rates due to the nondifferentiability of the dual function \eqref{dual_functional}. These drawbacks can be alleviated by the AL framework \cite[Chapter 2.1]{Bert_Constrained}.
The AL is obtained by adding a quadratic penalty term to the ordinary Lagrangian.
The AL associated with problem \eqref{P} is
\begin{align}\label{augm_lagr}
\Lambda_{\rho}(\bx,\bl) ~&=~ F(\bx) ~+~ \langle \bl , {\bf A} {\bx} - \bb\rangle ~+~ \frac{\rho}{2}\| {\bf A}\bx - \bb \|^2,
\end{align}
where $\rho>0$ is a penalty parameter. We recall that the standard Augmented Lagrangian method is also referred to as the \emph{Method of Multipliers} in the literature \cite[Chapter 2.1]{Bert_Constrained}. 
A major drawback of the Augmented Lagrangian Method stems from the fact that \eqref{augm_lagr} is not  separable with respect to each $\bx_i$ due to the additional quadratic penalty term.

\subsection{The ADAL Algorithm}

The lack of decomposability of the AL calls for the development of specialized AL decomposition techniques.
ADAL is a primal-dual iterative method utilizing a local AL function $\Lambda_{\rho}^i$ which is defined as:
\begin{align}\label{local_lagr}
\Lambda_{\rho}^i(\bx_i,\bx_{-i}^k,\bl) ~=&~ f_i(\bx_i) ~+~ \langle \bl ,{\bf A}_i\bx_i \rangle \\
&\qquad ~+~ \frac{\rho}{2}\| {\bf A}_i\bx_i + \sum\nolimits_{j\in\mathcal{I}}^{j\neq i} {\bf A}_j\bx_j^k - \bb\|^2, \notag
\end{align}
where $\bx_{-i}^k = [\bx_1^k, \ldots, \bx_{i-1}^k,\bx_{i+1}^k,\ldots,\bx_N^k]^{\top}$.
%
The ADAL method is summarized in Alg. \ref{ADAL}.
ADAL has two parameters: a positive penalty parameter $\rho$ and a stepsize parameter
$\tau \in (0,1/q)$.
Each iteration of ADAL consists of three steps: i) every agent solves a local subproblem in a parallel
fashion based on the local approximation of the AL in \eqref{local_lagr}; 
ii) the agents update and communicate their primal variables to neighboring agents;
and iii) they update their dual variables based on the values of the communicated primal variables.

We emphasize here that the quantities ${\bf A}_j\bx_j^k$, appearing in the penalty term of the local AL \eqref{local_lagr}, correspond to the local primal variables of agent $j$ that are communicated to agent $i$. With respect to agent $i$, these are considered fixed parameters. The penalty term of each $\Lambda_{\rho}^i$ can be equivalently expressed as
\begin{align*}
&\| {\bf A}_i\bx_i + \sum\nolimits_{j\in\mathcal{I}}^{j\neq i} {\bf A}_j\bx_j^k - \bb \|^2 ~= \\
&  \qquad\qquad =~ \sum\nolimits_{l=1}^m \Big(  \big[{\bf A}_i\bx_i\big]_l + \sum\nolimits_{j\in\mathcal{I}}^{j\neq i} \big[{\bf A}_j\bx_j^k\big]_l - b_l \Big)^2.
\end{align*}
The above penalty term is present only in the  minimization computation \eqref{ADAL_1}, in Alg. \ref{ADAL}. Hence, for those $l$ such that $[{\bf A}_i]_l = {\mathbf 0}$, the terms $\sum_{j\in\mathcal{I}}^{j\neq i} \big[{\bf A}_j\bx_j^k\big]_l -b_l$  are just constant terms in the minimization step, and can be neglected. Here, $[{\bf A}_i]_l$ denotes the $l$-th row of $\ba_i$  and ${\bf 0}$ stands for a zero vector of proper dimension. This implies that agent $i$ needs access only to the decisions $ \big[{\bf A}_j\bx_j^k\big]_l$ from all agents $j\neq i$ that are present in the same constraints $l$ as $i$. Moreover, regarding the term $\langle \bl ,{\bf A}_i\bx_i \rangle$ in \eqref{local_lagr}, we have that $\langle \bl ,{\bf A}_i\bx_i \rangle ~=~ \sum_{j=1}^m \lambda_j [{\bf A}_i\bx_i]_j$. Hence, we see that, in order to compute \eqref{ADAL_1}, each agent $i$ needs access only to those $\lambda_j$ for which $[{\bf A}_i]_j\neq {\mathbf 0}$.

\begin{algorithm}[t]\caption{Accelerated Distributed Augmented Lagrangians (ADAL)}\label{ADAL}
Set $k=0$, $\tau \in (0,\frac{1}{q})$ and define initial Lagrange multipliers $\bl^0$ and initial primal variables $\bx^0$.
\begin{enumerate}
\item[1.] For fixed Lagrange multipliers $\boldsymbol{\lambda}^k$, determine $\hbx_i^k$ for every $i\in\mathcal{I}$ as the solution of the following problem:
\begin{equation}\label{ADAL_1}
\begin{aligned}
\min_{\bx_i \in \mathcal{X}_i}\, &\, \Lambda_{\rho}^i(\bx_i,\bx_{-i}^k,\bl^k).
\end{aligned}
\end{equation}
\item[2.] Set for every $i\in\mathcal{I}$
\begin{equation}\label{ADAL_2}
\bx_i^{k+1} = \bx_i^k + \tau (\hbx_i^k -  \bx_i^k ).
\end{equation}
\item[3.] If the constraints $\sum_{i=1}^{N}{\bf A}_i \bx_i^{k+1} = \bb $ are satisfied and $\ba_i\hat{\bx}_i^{k} = \ba_i\bx_i^{k}$ for all $i\in\mathcal{I}$,
then stop (optimal solution found). Otherwise, set:
\begin{equation}\label{ADAL_3}
\bl^{k+1} = \bl^k + \rho\tau \Big(  \sum\nolimits_{i=1}^{N} {\bf A}_i \bx^{k+1}_i - \bb\Big),
\end{equation}
increase $k$ by one and return to Step 1.
\end{enumerate}
\end{algorithm}

\subsection{A Motivating Example: Distributed Model Predictive Control (DMPC) with linear coupling constraints\label{subsec:mpc}}
Consider a discrete-time linear dynamical system expressed in terms of the dynamics of
a set $\I=\{1,\dots,N\}$ of individual subsystems as
\begin{align}\label{local}
&\bx_i^{t+1} = \suml_{j\in\C_i^t} \left(\ba_{ij}^t \bx_j^t + \bB_{ij}^t \bu_j^t \right)	\nonumber\\
&\bx_i^{t}\in\X_i^{t}, \quad \bu_i^t\in\U_i^t, \quad \forall ~i\in\I,
\end{align}
where  $\bx_i^t\in\X_i^t\subseteq \R^{n_i}$ and $\bu_i^t\in\U_i^t\subseteq\R^{p_i}$ represent a local state and input at time $t$. We assume that the local constraint sets $\X_i^t,~\U_i^t$ satisfy $\X^t = \X_1^t \times \cdots \times \X_N^t$, $\U^t = \U_1^t \times \cdots \times \U_N^t$, and $n=\sum_{i\in\I} n_i$, $p=\sum_{i\in\I} p_i$.
The dynamic interconnections at time $t$ among the subsystems are modeled by a directed graph $\mathcal{G}^t=(\mathcal{I},\mathcal{E}^t)$. The set of edges $\mathcal{E}^t\subseteq \mathcal{I}\times\mathcal{I}$ contains a directed edge $(v_i,v_j)$ if the state or input of subsystem $i$ at time $t$ affects the dynamics of subsystem $j$ at time $t+1$. More formally, $(v_j,v_i)\in \mathcal{E}^t$ if and only if  $\ba_{ij}^t \neq 0 ~\vee~ \bB_{ij}^t \neq 0$, where the matrices $\ba_{ij}^t \in\R^{n_i\times n_j}$ and $\bB_{ij}^t\in\R^{n_i\times p_j}$, define the dynamic coupling between subsystems $i$ and $j$ at time $t$.
We define the coupling in-neighborhood $\mathcal{C}_i^t$ (resp. out-neighborhood $\TC_i^t$) of subsystem $i$ at time $t$ as the set of sybsystems $j$ whose dynamics at $t$ affect (resp. is affected by) the evolution of subsystem $i$, i.e.,  $\C_i^t = \{ j\in\I : (v_j,v_i)\in \mathcal{E}^t \}$ (resp. $\TC_i^t = \{ j\in\I : (v_i,v_j)\in \mathcal{E}^t \}$).

Determining optimal control sequences for \eqref{local} using MPC consists of  solving online a finite horizon open-loop optimal control problem, subject to the aforementioned system dynamics and constraints that involve states and control inputs.
Specifically,  the MPC problem for the dynamical system \eqref{local} is parametric to the initial state $\bx^1$ and can be formulated as
\begin{align}\label{DMPC1}
\min_{\bx,\bu} &	~~\sum_{i=1}^N \bigg[ \sum_{t=1}^{H-1} \ell^t_i(\bx_i^t,\bu_i^t) + \Fc_i(\bx_i^{H}) \bigg] \nonumber\\
\text{s.t.} \quad &\bx_i^{t+1} = \suml_{j\in\C_i^t} \left(\ba_{ij}^t \bx_j^t + \bB_{ij}^t \bu_j^t \right),	\\
&\bx_i^{t+1}\in\X_i^{t+1}, \quad \bu_i^t\in\U_i^{t},\nonumber\\
& \forall ~i\in\I ~\text{and} ~t\in\{1,\dots,H-1\}.  \nonumber
\end{align}
where the functions  $\ell^t_i(\bx_i^t,\bu_i^t): \R^{n_i} \times \R^{p_i} \to \R$ denote the running cost and the function $\Fc_i(\bx_i^H): \R^{n_i}\to \R$ denotes the terminal cost of subsystem $i$.

To use the ADAL framework in Alg. \ref{ADAL} to solve \eqref{DMPC1}, we introduce a local AL for each subsystem $i$ as
\begin{align}\label{local_AL}
	&\Lambda_{\rho}^i(\bx_i,\bu_i,\bl) ~=~   \sum_{t=1}^{H-1} \ell^t_i(\bx_i^t,\bu_i^t) ~+ \Fc_i(\bx_i^{H})    \\
	& +\sum_{t=1}^{H-1} \Bigg[ (\bl_i^{t+1})^T\bx_i^{t+1} -  \sum_{j\in\TC_i^t} (\bl_j^{t+1})^T \left(\ba_{ji}^t \bx_i^t + \bB_{ji}^t \bu_i^t\right)  \nonumber\\
	& + \frac{\rho}{2}\| \bx_i^{t+1} - \ba_{ii}^t \bx_i^t - \bB_{ii}^t \bu_i^t -\sum_{j\in\C_i^t\backslash\{i\}} \left(\ba_{ij}^t \tbx_j^t + \bB_{ij}^t \tbu_j^t \right) \|^2  \nonumber\\
	& + \sum_{j\in\TC_i^t}\frac{\rho}{2}\| \tbx_j^{t+1} - \ba_{ji}^t \bx_i^t - \bB_{ji}^t \bu_i^t \nonumber\\
	& \qquad \qquad \qquad\qquad- \sum_{m\in\C_j^t\backslash\{i\}}\left(\ba_{jm}^t \tbx_m^t + \bB_{jm}^t \tbu_m^t\right) \|^2 \Bigg],  \nonumber
\end{align}
where  $\tbx_j,\tbu_j$ denote the primal variables that are controlled by subsystem $j$  but communicated to subsystem $i$ for optimization of its local Lagrangian $\Lambda_{\rho}^i$. With respect to subsystem $i$, these are just considered as fixed parameters.
That is, the local AL is created by taking all the terms involving $\bx_i$ in the original AL and setting the remaining variables as fixed parameters, i.e., $\bx_j$ as $\tilde{\bx}_j$ for all $i \neq j$.

Observe that the local AL \eqref{local_AL} of each subsystem $i$ includes only locally available information. Regarding  the dual variables, the necessary information includes $\bl_i^{t+1}$ and  all $\bl_j^{t+1}$ for every $t\in\{1,\dots,H-1\}$ and $j\in \TC_i^t$, i.e., the dual variables corresponding to the dynamical constraints of $i$ and also those of the out-neighbors of subsystem $i$ in all coupling graphs $\mathcal{E}^t$. Regarding the primal variables, the necessary information for the local AL of subsystem $i$ includes all $\tbx^t_j,~\tbu^t_j$ for every $t\in\{1,\dots,H\}$ from the in-neighbors $j\in\C_i^t$, the out-neighbors $j\in\TC_i^t$, and  the in-neighbors  of the out-neighbors of $i$, namely $\{m\in\I: m\in\C_j^t, ~\forall j\in\TC_i^t\}$ for all the coupling graphs $\mathcal{E}^t$. In other words, each subsystem $i$ needs to be able to exchange messages with all subsystems $j$ that belong to its 2-hop communication neighborhood $\I_i = \bigcup_{t=1}^{H}\left( \C_i^t \cup \TC_i^t \cup \{m\in\I: m\in\C_j^t, ~\forall j\in\TC_i^t\} \right)$.

In practice \eqref{DMPC1} is solved repeatedly, and after each solve, the first few inputs are applied to \eqref{local} and the horizon is shifted accordingly, providing a new initial condition for a subsequent solution of \eqref{DMPC1}. In this framework, solving \eqref{DMPC1} until convergence is time consuming. Therefore, early termination is highly desired, while ensuring a good quality solution.

\section{Rate of Convergence\label{sec:rate}}
In this section we characterize the rate of convergence of the ADAL method.
\sml{In what follows, we denote the subgradient of a convex function $f$ at a point $\bx \in \mathcal{X}$ by $\bs_{\bx}$, i.e.,
a vector $\bs_{\bx} \in \mathbb{R}^n$ is a subgradient of $f$ at $\bx \in \mathcal{X}$ if
\begin{align*}
f(\by) \ge f(\bx) + \langle \bs_{\bx}, \by-\bx\rangle,\quad \forall \by \in \mathcal{X}.
\end{align*}
We also denote the convex subdifferential of $f$ at $\bx \in \mathcal{X}$ by $\partial f (\bx)$,
which is the set of all subgradients $\bs_{\bx}$.}

The convergence of ADAL relies on the following three assumptions, which are typically required in the analysis of convex optimization methods:
\begin{itemize}
\item[(A1)] The functions  $f_i$ are convex, and the sets $\mathcal{X}_i$ are convex, closed, and bounded for all $i\in\mathcal{I}$ .
\item[(A2)]
The Lagrangian function $L$ has a saddle point $(\bx^*,\bl^*)\in \R^n\times\R^m$ so that
\begin{equation*}
L(\bx^*,\bl)\leq L(\bx^*,\bl^*)\leq L(\bx,\bl^*),~ \forall  ~\bx\in \X,\,~ \bl\in\R^m.
\end{equation*}
\item[(A3)] All subproblems \eqref{ADAL_1} are exactly solvable at every iteration.
\end{itemize}

Assumption (A1) implies that there exists a constant $D_{\X}$ such that
\begin{align}\label{eqn:DX}
D_{\X} := \max\nolimits_{\bx,\by \in \X} \|\bx-\by\|
\end{align}
and also Lipschitz subgradients, i.e., there exists a constant $G$ such that for all $i \in \I$
\begin{align}\label{eqn:G}
\|\bs_{\bx}\| \le G, \quad \forall \bs_{\bx} \in \partial f_i(\bx),~ \bx \in \X.
\end{align}
Assumption (A2) implies that the point $\bx^*$ is a solution of problem \eqref{P} and the point $\bl^*$ is a solution of \eqref{D}.
Since \eqref{P} is a convex program with linear constraints,
strong duality holds, i.e., the optimal values of the primal and dual problems are equal, as long as \eqref{P} is feasible without the need of any constraint qualification.
Assumption (A3) is satisfied for most MPC problems, see e.g., \cite[Section V]{RichterConf},
or for general problems with simple constraint sets $\X$, e.g., boxes or balls.


\subsection{Lemmas}
In this subsection, we provide a few lemmas that will help us prove the convergence of ADAL.
Our analysis relies on the ergodic average of the primal variables up to iteration $k$:
\begin{align*}
\by^k := \frac{1}{k}\sum\nolimits_{p=0}^{k-1} \hbx^p.
\end{align*}
To avoid cluttering the notation, we will use $\sum_i$ to denote summation over all $i\in\mathcal{I}$, i.e., $\sum_i = \sum_{i=1}^N$, unless explicitly noted otherwise.
We define the \emph{residual} $\br(\bx) \in\mathbb{R}^m$  as the vector containing the amount of all constraint violations with respect to primal variable $\bx$, i.e.,
\begin{align}\label{eqn:residual}
\br(\bx) = \sum\nolimits_i \ba_i \bx_i - \bb.
\end{align}
We also define the auxiliary dual variable $\bar{\bl}^k$ as
\begin{align}\label{lambda_bar}
\bar{\bl}^k ~:=~ \bl^k  +\rho(1-\tau)\br(\bx^k).
\end{align}
In the next lemma, we obtain an iterative relation for $\bar{\bl}^k$.
The proof can be found in \cite[Theorem 1]{Nikos_math_prog}.

\begin{lem}\label{lem:lbar}
The dual update step \eqref{ADAL_3} of ADAL is equivalent to the update rule
\begin{equation*}
\bar{\bl}^{k+1} ~=~ \bar{\bl}^{k}+\tau\rho \br(\hat{\bx}^k).
\end{equation*}
\end{lem}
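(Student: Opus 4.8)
The plan is to start from the dual update rule \eqref{ADAL_3}, written in terms of the residual, and then substitute the definition \eqref{lambda_bar} of the auxiliary variable on both sides, chasing the algebra until the penalty terms telescope correctly. Concretely, \eqref{ADAL_3} reads $\bl^{k+1} = \bl^k + \rho\tau\,\br(\bx^{k+1})$, where I am using \eqref{eqn:residual} to recognize $\sum_i \ba_i\bx_i^{k+1} - \bb = \br(\bx^{k+1})$. Applying \eqref{lambda_bar} at index $k+1$ gives $\bar{\bl}^{k+1} = \bl^{k+1} + \rho(1-\tau)\br(\bx^{k+1})$, and substituting the dual update yields $\bar{\bl}^{k+1} = \bl^k + \rho\tau\,\br(\bx^{k+1}) + \rho(1-\tau)\br(\bx^{k+1}) = \bl^k + \rho\,\br(\bx^{k+1})$. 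On the other hand, \eqref{lambda_bar} at index $k$ gives $\bl^k = \bar{\bl}^k - \rho(1-\tau)\br(\bx^k)$, so that $\bar{\bl}^{k+1} = \bar{\bl}^k - \rho(1-\tau)\br(\bx^k) + \rho\,\br(\bx^{k+1})$.

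The crux is then to show that $-\rho(1-\tau)\br(\bx^k) + \rho\,\br(\bx^{k+1}) = \tau\rho\,\br(\hbx^k)$, i.e. that $\br(\bx^{k+1}) = (1-\tau)\br(\bx^k) + \tau\,\br(\hbx^k)$. This is exactly the statement that the residual is an affine function of $\bx$ (each $\ba_i$ is linear and $\bb$ is constant), combined with the primal averaging step \eqref{ADAL_2}: since $\bx_i^{k+1} = \bx_i^k + \tau(\hbx_i^k - \bx_i^k) = (1-\tau)\bx_i^k + \tau\hbx_i^k$ for every $i$, linearity gives $\sum_i \ba_i\bx_i^{k+1} = (1-\tau)\sum_i \ba_i\bx_i^k + \tau\sum_i \ba_i\hbx_i^k$, and subtracting $\bb = (1-\tau)\bb + \tau\bb$ yields the claimed identity. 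Plugging this back in, the $\br(\bx^k)$ terms cancel and one is left with $\bar{\bl}^{k+1} = \bar{\bl}^k + \tau\rho\,\br(\hbx^k)$, which is the assertion.

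Conversely, one should also check that the displayed update rule implies \eqref{ADAL_3}, since the lemma claims equivalence; this direction just runs the same substitutions in reverse, using \eqref{lambda_bar} to eliminate $\bar{\bl}^k$ and $\bar{\bl}^{k+1}$ and the residual identity to re-express $\br(\hbx^k)$, recovering $\bl^{k+1} = \bl^k + \rho\tau\,\br(\bx^{k+1})$. There is no real obstacle here: the only thing to be careful about is bookkeeping the coefficients $\tau$ and $1-\tau$ and making sure the convex-combination identity for the residual is invoked explicitly rather than silently. Since the paper states the proof is in \cite[Theorem 1]{Nikos_math_prog}, I would keep the writeup to the few lines of substitution above rather than reproving anything about ADAL's convergence.
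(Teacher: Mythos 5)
Your proposal is correct and follows essentially the same route as the paper's own argument: both hinge on the identity $\br(\bx^{k+1}) = (1-\tau)\br(\bx^k) + \tau\br(\hbx^k)$, which comes from the linearity of the residual combined with the convex-combination primal update \eqref{ADAL_2}, and then the $(1-\tau)$ terms in the definition \eqref{lambda_bar} cancel to leave the stated recursion. The algebra checks out, so no changes are needed.
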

\vspace{3pt}
%
%

In the next lemma, we utilize Lemma \ref{lem:lbar} and the first order optimality conditions for each local subproblem \eqref{ADAL_1} to
bound the function value at each iteration, which later will allow us to obtain a telescoping sum.
For this, we make use of the Lyapunov/Merit function
\begin{align}\label{phi}
\phi^k(\bl) ~=~ \rho\sum\nolimits_{i}\| \ba_i (\bx_i^k -\bx_i^*) \|^2 + \frac{1}{\rho}\|\bar{\bl}^k-\bl \|^2,
\end{align}
for all $k \ge 0$ and any arbitrary $\bl \in \mathbb{R}^m$.
A similar result whose Lyapunov/Merit function $\phi^k$ does not depend on $\bl$
can be found in \cite{Nikos_ACC2015}. Note that dependence of $\phi^k(\bl)$ on $\bl$ is key to obtain the convergence rates presented in this paper.

%


%
\begin{lem}\label{lemma3}
Assume (A1)--(A3). Then, for any $\bl \in \mathbb{R}^m$ and $k \ge 0$, the following holds:
\begin{align*}
F(\hbx^k) -  F(\bx^*) + \langle \bl, \br(\hbx^k)\rangle
\leq~
\frac{1}{2\tau}\big( \phi^k(\bl) - \phi^{k+1}(\bl) \big).
\end{align*}
\end{lem}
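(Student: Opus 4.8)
The plan is to combine three ingredients: (i) convexity of each $f_i$ applied with a subgradient at the point $\hbx_i^k$, (ii) the first-order optimality condition for the local subproblem \eqref{ADAL_1}, and (iii) the identity from Lemma \ref{lem:lbar} relating $\bar{\bl}^{k+1}$ to $\bar{\bl}^k$ and the residual $\br(\hbx^k)$. First I would write the optimality condition for the minimization in \eqref{ADAL_1}: since $\hbx_i^k$ minimizes $\Lambda_\rho^i(\cdot,\bx_{-i}^k,\bl^k)$ over $\mathcal{X}_i$, there is a subgradient $\bs_i^k \in \partial f_i(\hbx_i^k)$ such that for all $\bx_i \in \mathcal{X}_i$,
\begin{align*}
\big\langle \bs_i^k + \ba_i^\top \bl^k + \rho\, \ba_i^\top\big( \ba_i\hbx_i^k + \suml_{j\neq i}\ba_j\bx_j^k - \bb \big),\ \bx_i - \hbx_i^k \big\rangle \ge 0.
\end{align*}
The key manipulation is to recognize the dual-like quantity inside: $\ba_i\hbx_i^k + \sum_{j\neq i}\ba_j\bx_j^k - \bb = \br(\hbx^k)$-ish only after accounting for the mismatch between $\hbx_j^k$ and $\bx_j^k$; I would rewrite it as $\br(\bx^k) + \ba_i(\hbx_i^k - \bx_i^k)$ and then use the definition $\bar{\bl}^k = \bl^k + \rho(1-\tau)\br(\bx^k)$ together with $\bl^k + \rho\br(\bx^k) = \bar{\bl}^k + \rho\tau\br(\bx^k)$ to express $\ba_i^\top\bl^k + \rho\ba_i^\top(\br(\bx^k)+\ba_i(\hbx_i^k-\bx_i^k))$ in terms of $\bar{\bl}^k$ plus correction terms proportional to $\ba_i(\hbx_i^k-\bx_i^k)$.

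Next I would plug $\bx_i = \bx_i^*$ into the optimality inequality and sum over $i$. Using $\langle \bs_i^k, \bx_i^* - \hbx_i^k\rangle \le f_i(\bx_i^*) - f_i(\hbx_i^k)$ from convexity, this yields a lower bound on $F(\hbx^k) - F(\bx^*)$ in terms of $\sum_i \langle \ba_i(\hbx_i^k - \bx_i^*),\ \text{(dual-type vector)}\rangle$. Then for an arbitrary $\bl$, I would add and subtract $\langle \bl, \br(\hbx^k)\rangle$; since $\br(\hbx^k) = \sum_i \ba_i\hbx_i^k - \bb = \sum_i \ba_i(\hbx_i^k - \bx_i^*)$ (because $\bx^*$ is feasible, $\sum_i \ba_i\bx_i^* = \bb$), the whole left-hand side $F(\hbx^k) - F(\bx^*) + \langle \bl, \br(\hbx^k)\rangle$ collapses into inner products of $\ba_i(\hbx_i^k - \bx_i^*)$ against $\bar{\bl}^k - \bl$ and against $\ba_i(\hbx_i^k - \bx_i^k)$. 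At this point I would invoke Lemma \ref{lem:lbar} to substitute $\bar{\bl}^{k+1} - \bar{\bl}^k = \tau\rho\,\br(\hbx^k)$, and I would also use the primal update \eqref{ADAL_2}, $\bx_i^{k+1} - \bx_i^k = \tau(\hbx_i^k - \bx_i^k)$, to convert differences of $\hbx$ and $\bx^k$ into differences of consecutive iterates $\bx^{k+1}$ and $\bx^k$.

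The final step is the telescoping algebra: I would apply the elementary identity $2\langle a - b, a - c\rangle = \|a-b\|^2 + \|a-c\|^2 - \|b-c\|^2$ (equivalently $2\langle u, v\rangle = \|u+v\|^2 - \|u\|^2 - \|v\|^2$ with appropriate choices) separately to the term involving $\bar{\bl}^k - \bl$ and to the term involving $\ba_i(\hbx_i^k - \bx_i^*)$, so that all the cross terms reassemble into $\phi^k(\bl) - \phi^{k+1}(\bl)$ up to the factor $\frac{1}{2\tau}$, with the leftover squared terms (those coming from $\ba_i(\hbx_i^k-\bx_i^k)$ and from $\br(\hbx^k)$) having a sign that lets them be discarded — this is exactly where $\tau < 1/q$ enters, since bounding $\|\br(\hbx^k)\|^2 = \|\sum_i \ba_i(\hbx_i^k - \bx_i^k)\|^2 \le q\sum_i\|\ba_i(\hbx_i^k-\bx_i^k)\|^2$ via the maximum-degree argument is what makes the residual penalty dominate and keeps the discarded terms nonnegative. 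I expect the main obstacle to be the bookkeeping in this last step: carefully tracking which squared terms appear with which coefficients after the two completing-the-square operations, and verifying that the $\tau<1/q$ condition is precisely strong enough to absorb the negative contributions without leaving an uncontrolled remainder.
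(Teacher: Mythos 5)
Your plan follows essentially the same route as the paper's proof in Appendix \ref{app:B}: first-order optimality of the local subproblems \eqref{ADAL_1}, the subgradient inequality at $\hbx_i^k$, the identity of Lemma \ref{lem:lbar}, and a final completing-the-square step in which the leftover cross term is absorbed by the maximum-degree argument --- exactly where $\tau<1/q$ enters. One small correction to your last step: the quantity to which the sparsity bound applies is $\sum_i\ba_i(\hbx_i^k-\bx_i^k)=\br(\hbx^k)-\br(\bx^k)$, not $\br(\hbx^k)$ itself; the factor $q$ ends up multiplying $\|\br(\hbx^k)\|^2$ via a row-wise Young inequality on the cross term $\tau\rho\langle \br(\hbx^k),\br(\bx^k)-\br(\hbx^k)\rangle$, but this does not change the structure of your argument.
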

The proof of this lemma can be found in Appendix \ref{app:B}.

\subsection{Primal Optimality and Feasibility}
Using Lemma \ref{lemma3} and the properties of convex functions, we now provide two theorems regarding the convergence rate of ADAL.
More specifically, in Theorem \ref{thm1},
we consider the objective value difference $F(\by^k)-F(\bx^*)$ and the constraint violation $\| \ba \by^k - \bf b\|$ together and show that their sum decreases at a worst-case $O(1/k)$ rate.
In Theorem \ref{thm2}, we upper bound the objective value difference and constraint violation separately, and show that each one of them decreases at a worst-case $O(1/k)$ rate.


\begin{thm}\label{thm1}
Assume (A1)--(A3). Recall that $\by^k = \frac{1}{k}\sum_{p=0}^{k-1} \hbx^p $ denotes the ergodic average of the primal variable sequence generated by ADAL up to iteration $k$
and $\br(\bx) = \ba\bx - \bb$ denotes the residual at $\bx$.
Then, for all $k$
\begin{align}\label{thm_eq}
F(\by^k) -  F(\bx^*) + \|\br( \by^k)\| ~\leq~ \frac{1}{2k\tau} \phi,
\end{align}
where $\phi = \sum\nolimits_{i=1}^N {\rho}\|\ba_i(\bx_i^0-\bx_i^*)\|^2 +\frac{1}{\rho}(\|\bar\bl^0\|+1)^2$.
\end{thm}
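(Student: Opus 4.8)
The plan is to start from Lemma~\ref{lemma3} and sum the inequality over $p = 0, 1, \dots, k-1$. Since the right-hand side of Lemma~\ref{lemma3} telescopes, summing yields
\begin{align*}
\sum\nolimits_{p=0}^{k-1}\Big( F(\hbx^p) - F(\bx^*) + \langle \bl, \br(\hbx^p)\rangle \Big)
\le \frac{1}{2\tau}\big( \phi^0(\bl) - \phi^k(\bl)\big) \le \frac{1}{2\tau}\phi^0(\bl),
\end{align*}
where I have dropped the nonnegative term $\phi^k(\bl)$. Next I would use convexity of $F$ (which follows from (A1)) together with the linearity of the residual map $\br(\cdot)$ in $\bx$: both $F(\by^k) \le \frac{1}{k}\sum_{p=0}^{k-1} F(\hbx^p)$ by Jensen, and $\br(\by^k) = \frac{1}{k}\sum_{p=0}^{k-1}\br(\hbx^p)$ exactly since $\by^k$ is the arithmetic mean of the $\hbx^p$'s. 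Dividing the summed inequality by $k$ therefore gives
\begin{align*}
F(\by^k) - F(\bx^*) + \langle \bl, \br(\by^k)\rangle \le \frac{1}{2k\tau}\phi^0(\bl),
\end{align*}
valid for every $\bl \in \R^m$.

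The next step is to convert the inner-product term $\langle \bl, \br(\by^k)\rangle$ into the norm $\|\br(\by^k)\|$. The standard device here is to choose $\bl$ adversarially, namely $\bl = \bar\bl^0 + \frac{\br(\by^k)}{\|\br(\by^k)\|}$ (a unit perturbation of the initial auxiliary multiplier, pointing in the direction of the current residual), so that $\langle \bl, \br(\by^k)\rangle \ge \langle \bar\bl^0, \br(\by^k)\rangle + \|\br(\by^k)\|$. I also need to bound the other place $\bl$ appears, the term $\frac{1}{\rho}\|\bar\bl^0 - \bl\|^2$ inside $\phi^0(\bl)$; with this choice that term equals $\frac{1}{\rho}\|\frac{\br(\by^k)}{\|\br(\by^k)\|}\|^2 = \frac{1}{\rho}$. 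Actually, to produce the clean constant $\frac{1}{\rho}(\|\bar\bl^0\|+1)^2$ stated in the theorem, the cleaner choice is $\bl = \frac{\br(\by^k)}{\|\br(\by^k)\|}$ (no shift), which makes $\langle \bl, \br(\by^k)\rangle = \|\br(\by^k)\|$ directly, and makes $\frac{1}{\rho}\|\bar\bl^0 - \bl\|^2 \le \frac{1}{\rho}(\|\bar\bl^0\| + 1)^2$ by the triangle inequality. Then $\phi^0(\bl) = \rho\sum_i\|\ba_i(\bx_i^0 - \bx_i^*)\|^2 + \frac{1}{\rho}\|\bar\bl^0 - \bl\|^2 \le \phi$, and substituting gives exactly \eqref{thm_eq}. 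One subtlety to handle is the degenerate case $\br(\by^k) = \0b$, where the direction is undefined; there the inequality \eqref{thm_eq} reduces to $F(\by^k) - F(\bx^*) \le \frac{1}{2k\tau}\phi$, which follows from the summed inequality with $\bl = \0b$, so it is a trivial special case.

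The main obstacle, assuming Lemma~\ref{lemma3} as given, is really just the bookkeeping in the final substitution: making sure the adversarial choice of $\bl$ simultaneously lower-bounds the left-hand inner-product term and does not blow up the $\|\bar\bl^0 - \bl\|^2$ term in $\phi^0(\bl)$, and checking that $\phi^0(\bl)$ with $\bx^0$ plugged in matches the definition of $\phi$ in the statement. There is no deep difficulty here beyond the telescoping-plus-Jensen-plus-duality-trick pattern; the genuine analytical content is all inside Lemma~\ref{lemma3} (the Lyapunov descent estimate), which I am permitted to invoke. I would also note in passing why the bound is stated for the ergodic average $\by^k$ rather than the last iterate: the telescoping only controls the \emph{average} of the per-iteration gaps, and convexity is exactly what lets us push that average inside $F$ and transfer the guarantee to a single point.
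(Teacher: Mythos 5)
Your proposal is correct and follows essentially the same route as the paper's proof: summing Lemma~\ref{lemma3}, dropping $\phi^k(\bl)\ge 0$, applying convexity of $F$ and linearity of $\br(\cdot)$ to pass to the ergodic average, and then choosing $\bl$ as the unit vector in the direction of $\br(\by^k)$ (the paper writes this as $\argmax_{\|\bl\|\le 1}\langle\bl,\br(\by^k)\rangle$) before bounding $\frac{1}{\rho}\|\bar\bl^0-\bl\|^2\le\frac{1}{\rho}(\|\bar\bl^0\|+1)^2$ by the triangle inequality. Your explicit handling of the degenerate case $\br(\by^k)=\0b$ is a minor point the paper leaves implicit, but otherwise the arguments coincide.
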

%
\begin{proof}
Summing the relation in Lemma \ref{lemma3} for all $p=0,\dots,k-1$, we get
\begin{align}\label{10}
& \sum\nolimits_{p=0}^{k-1}F(\hat{\bx}^p)  ~-~ \sum\nolimits_{p=0}^{k-1} F(\bx^*) ~+~ \sum\nolimits_{p=0}^{k-1}\Big\langle \bl, \br(\hbx^p) \Big\rangle\notag\\
&  \leq~ \frac{1}{2\tau} \Big( \phi^0(\bl) - \phi^{k}(\bl)\Big).
\end{align}
By the convexity of $F$, we have that
\begin{align*}
\sum\nolimits_{p=0}^{k-1}  \frac{1}{k} F(\hat{\bx}^p) ~\geq~  F \Big( \sum\nolimits_{p=0}^{k-1} \frac{1}{k}\hbx^p \Big),
\end{align*}
which implies that $\sum\nolimits_{p=0}^{k-1}F(\hat{\bx}^p) ~\geq~  k F ( \by^k  )$. The analogous relation holds for $\sum_{p=0}^{k-1}\br(\hbx^p) \geq k \br( \by^k  ) $, since it is a linear (convex) mapping. We also have that $\sum_{p=0}^{k-1} F(\bx^*) = kF(\bx^*)$. Hence, \eqref{10} can be expressed as
\begin{align*}
 kF( \by^k  )  - kF(\bx^*) + k\langle \bl, \br( \by^k )\rangle~\leq~ \frac{1}{2\tau} \Big( \phi^0(\bl) - \phi^{k}(\bl)\Big),
\end{align*}
or,
\begin{align}\label{eqn:ergodic}
 F( \by^k  ) - F(\bx^*) + \langle \bl, \br( \by^k  )\rangle  ~\leq~ \frac{1}{2k\tau}   \phi^0(\bl),
\end{align}
because for any $\bl \in \mathbb{R}^m$, we have $\phi^{k}(\bl) \geq 0$.

The above inequality is true for all $\bl \in \mathbb{R}^m$, hence it must also hold for any point in the ball $\mathcal{B} = \{\bl \mid \|\bl\| \le 1\}$. We now let $\bl = \tilde{\bl}^k \triangleq \argmax_{\bl \in \mathcal{B}} \langle \bl, \br( \by^k  )\rangle$ and rewrite the above relation as
\begin{align*}
 F( \by^k  )  - F(\bx^*) +  \|\br( \by^k  )\| ~\leq~ \frac{1}{2k\tau}   \phi^0(\tilde{\bl}^k),
\end{align*}
where we used $\langle \tilde{\bl}^k, \br( \by^k  )\rangle = \|\br( \by^k  )\|$.
Finally, the term on the right-hand side can be bounded as
\begin{align*}
\phi^0(\tilde{\bl}^k) = &\sum\nolimits_{i=1}^N {\rho}\|\ba_i(\bx_i^0-\bx_i^*)\|^2 +\frac{1}{\rho}\|\bar{\bl}^0-\tilde{\bl}^k\|^2\\
\le &\sum\nolimits_{i=1}^N {\rho}\|\ba_i(\bx_i^0-\bx_i^*)\|^2 +\frac{1}{\rho}(\|\bar{\bl}^0\|+1)^2,
\end{align*}
which gives the desired result.
\end{proof}
\vspace{3pt}

The importance of this bound is that
the computation complexity can be specified in advance as long as the diameters of the primal constraint sets $\mathcal{X}_i$ can be determined.
However, when the primal solution $\by^k$ is not feasible, it is possible that $F(\by^k) -F^* < 0$.
In this case, the bound in \eqref{thm_eq} can  still be useful if the primal residual can be tightly bounded
as pointed out in \cite{LanDBnd}, i.e., if $\|\mathbf{A}\by^k-\mathbf{b}\| < \delta$ for a relatively small $\delta >0$,
then a lower bound of $F(\by^k) -F^*$ is given by
\begin{align*}
F(\by^k) -  F(\bx^*)  \ge \langle \boldsymbol{\lambda}^*, \mathbf{A}\by^k-\mathbf{b}\rangle \ge - \delta\|\boldsymbol{\lambda}^*\|,
\end{align*}
where $\boldsymbol{\lambda}^*$ is a component of the saddle point $(\mathbf{x}^*,\boldsymbol{\lambda}^*)$ of \eqref{ord_lagr}.

%
\begin{thm}\label{thm2}
Assume (A1)--(A3). Recall that $\by^k = \frac{1}{k}\sum_{p=0}^{k-1} \hbx^p $ denotes  the ergodic average of the primal variable sequence generated by ADAL up to iteration $k$
and $\br(\bx) = \ba\bx - \bb$ denotes the residual at $\bx$.
Let $(\bx^*,\bl^*)$ be a saddle point of \eqref{ord_lagr}.
Then, for all $k$\vspace{2mm}
\begin{itemize}
\Item[(a)]
\begin{align*}
|F(\by^k) -  F(\bx^*)| \le \frac{1}{2k\tau} \max\{\phi^0(\0b),\phi^0(2\bl^*)\},
\end{align*}
where $\phi^0(\bl) = \sum\nolimits_{i=1}^N {\rho}\|\ba_i(\bx_i^0-\bx_i^*)\|^2 +\frac{1}{\rho}\|\bar{\bl}^0-\bl\|^2$.\\
\Item[(b)]
\begin{align*}
\|\br( \by^k)\|\le &~\frac{1}{2k\tau} \bigg[\sum\nolimits_{i=1}^N {\rho}\|\ba_i(\bx_i^0-\bx_i^*)\|^2 \\ &~+\frac{2}{\rho}\left(\|\bar{\bl}^0-\bl^*\|^2 + 1\right)\bigg].
\end{align*}
\end{itemize}
\end{thm}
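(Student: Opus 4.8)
The plan is to base everything on inequality \eqref{eqn:ergodic} from the proof of Theorem~\ref{thm1}, which holds for \emph{every} $\bl \in \R^m$:
\[
F(\by^k) - F(\bx^*) + \langle \bl, \br(\by^k)\rangle \;\le\; \frac{1}{2k\tau}\,\phi^0(\bl),
\]
together with one elementary consequence of the saddle-point property (A2). Since the left-hand saddle inequality forces $\bx^*$ to be feasible, i.e. $\ba\bx^* = \bb$, and since $\by^k \in \X$ (it is a convex combination of the points $\hbx^p$, and each $\X_i$ is convex by (A1)), the right-hand saddle inequality $L(\bx^*,\bl^*) \le L(\by^k,\bl^*)$ reduces to
\[
F(\by^k) - F(\bx^*) + \langle \bl^*, \br(\by^k)\rangle \;\ge\; 0 ;
\]
call this relation $(\star)$. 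All the work then consists in picking suitable test multipliers $\bl$ in \eqref{eqn:ergodic} so that $(\star)$ cancels the stray terms.

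For part~(a), the upper bound $F(\by^k) - F(\bx^*) \le \frac{1}{2k\tau}\phi^0(\0b)$ is simply \eqref{eqn:ergodic} at $\bl = \0b$. For the matching lower bound I would instantiate \eqref{eqn:ergodic} at $\bl = 2\bl^*$, giving $F(\by^k) - F(\bx^*) + \langle 2\bl^*, \br(\by^k)\rangle \le \frac{1}{2k\tau}\phi^0(2\bl^*)$; rewriting $(\star)$ as $\langle \bl^*, \br(\by^k)\rangle \ge -(F(\by^k) - F(\bx^*))$ and substituting bounds the left-hand side from below by $F(\bx^*) - F(\by^k)$, so $F(\bx^*) - F(\by^k) \le \frac{1}{2k\tau}\phi^0(2\bl^*)$. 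Combining the two one-sided estimates yields the claimed bound on $|F(\by^k) - F(\bx^*)|$.

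For part~(b), if $\br(\by^k) = \0b$ the inequality is trivial, so assume $\br(\by^k)\neq\0b$ and apply \eqref{eqn:ergodic} with $\bl = \bl^* + \br(\by^k)/\|\br(\by^k)\|$, so that $\langle \bl, \br(\by^k)\rangle = \langle \bl^*, \br(\by^k)\rangle + \|\br(\by^k)\|$. Then $(\star)$ lets me discard the nonnegative quantity $F(\by^k) - F(\bx^*) + \langle \bl^*, \br(\by^k)\rangle$, leaving $\|\br(\by^k)\| \le \frac{1}{2k\tau}\,\phi^0\!\bigl(\bl^* + \br(\by^k)/\|\br(\by^k)\|\bigr)$. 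It remains to bound this argument: expanding the definition of $\phi^0$, the triangle inequality gives $\|\bar{\bl}^0 - \bl^* - \br(\by^k)/\|\br(\by^k)\|\| \le \|\bar{\bl}^0 - \bl^*\| + 1$, and then $(a+b)^2 \le 2a^2 + 2b^2$ turns this into $\frac{1}{\rho}(\|\bar{\bl}^0 - \bl^*\| + 1)^2 \le \frac{2}{\rho}(\|\bar{\bl}^0 - \bl^*\|^2 + 1)$, which is exactly the second term in the asserted bound.

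There is no hard analytic obstacle here: Lemma~\ref{lemma3} and the ergodic $O(1/k)$ estimate \eqref{eqn:ergodic} already carry the weight, and the remaining steps are short. The only points that need a little care are (i) checking that $\by^k$ is an admissible test point in the saddle inequality, which is where the convexity of the $\X_i$ from (A1) enters, and (ii) the choice of the test multipliers $2\bl^*$ in part~(a) and $\bl^* + \br(\by^k)/\|\br(\by^k)\|$ in part~(b) — these particular choices are precisely what make $(\star)$ usable to peel off the objective-value and inner-product terms.
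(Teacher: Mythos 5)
Your proposal is correct and follows essentially the same route as the paper's own proof: the same test multipliers $\0b$ and $2\bl^*$ in part~(a), the same choice $\bl^* + \br(\by^k)/\|\br(\by^k)\|$ in part~(b), and the same use of the saddle-point inequality (your $(\star)$ is exactly the paper's intermediate relation). The only cosmetic differences are that you handle the $\br(\by^k)=\0b$ case explicitly and state the inequality $\|a+b\|^2\le 2\|a\|^2+2\|b\|^2$ where the paper writes it (slightly loosely) as an equality.
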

%
\begin{proof}
\noindent (a)
The inequality \eqref{eqn:ergodic} is true for all $\bl \in \mathbb{R}^m$, hence
letting $\bl = \mathbf{0}$ yields
\begin{align}\label{eqn:thm1eq0}
 F( \by^k  ) - F(\bx^*)   ~\leq~ \frac{1}{2k\tau}   \phi^0(\mathbf{0}).
\end{align}
Let $\bl^*$ be a dual optimal solution. Then, from the saddle point inequality, we have
\begin{align}\label{eqn:thm1eq1}
F(\bx^*)  ~\leq~  F( \by^k  ) + \langle \bl^*, \br( \by^k  )\rangle,
\end{align}
which implies
\begin{align}\label{eqn:thm1eq2}
F(\bx^*)  - F( \by^k  ) ~\leq~  \langle \bl^*, \br( \by^k  )\rangle.
\end{align}
Next, we find an upper bound of the term $\langle \bl^*, \br( \by^k  )\rangle$.
We add $\langle \bl^*, \br( \by^k  )\rangle$ to both sides of \eqref{eqn:thm1eq1} to obtain
\begin{align*}
\langle \bl^*, \br( \by^k  )\rangle  ~\leq~  F( \by^k  )- F(\bx^*) + \langle 2\bl^*, \br( \by^k  )\rangle.
\end{align*}
Using relation \eqref{eqn:ergodic} again with $\bl = 2\bl^*$ to bound the right-hand side of the above equation, we obtain
\begin{align*}
\langle \bl^*, \br( \by^k  )\rangle  ~\leq~  \frac{1}{2k\tau}   \phi^0(2\bl^*).
\end{align*}
Combining this with relation \eqref{eqn:thm1eq2}, we further obtain
\begin{align*}
F(\bx^*)  - F( \by^k  )  ~\leq~  \frac{1}{2k\tau}   \phi^0(2\bl^*).
\end{align*}
Combining this with relation \eqref{eqn:thm1eq0}, the desired result follows. \\

\noindent (b)
We next bound the residual $\|\br( \by^k  )\|$.
Using relation \eqref{eqn:ergodic} with $\bl = \bl^* + \frac{\br( \by^k  )}{\|\br( \by^k  )\|}$, we have
\begin{align}\label{eqn:thm1eq4}
& F( \by^k  ) - F(\bx^*) + \langle \bl^*, \br( \by^k  )\rangle  + \|\br( \by^k  )\|\nonumber\\
& ~\leq~ \frac{1}{2k\tau}   \phi^0\left(\bl^* + \frac{\br( \by^k  )}{\|\br( \by^k  )\|}\right).
\end{align}
Using the saddle point inequality together with the fact that $(\bx^*,\bl^*)$ is a primal-dual optimal pair,
we obtain
\begin{align*}
F( \by^k  ) + \langle \bl^*, \br( \by^k  )\rangle \ge F(\bx^*) + \langle \bl^*, \br( \bx^*  )\rangle,
\end{align*}
which implies
\begin{align*}
F( \by^k  ) -F(\bx^*) +  \langle \bl^*, \br( \by^k  )\rangle \ge 0.
\end{align*}
Combining this with relation \eqref{eqn:thm1eq4}, we obtain
\begin{align*}
\|\br( \by^k  )\|
 ~\leq~ \frac{1}{2k\tau}   \phi^0\left(\bl^* + \frac{\br( \by^k  )}{\|\br( \by^k  )\|}\right).
\end{align*}
From the definition of the Lyapunov/Merit function $\phi^k(\bl)$ in \eqref{phi},
the right-hand side can be represented as
\begin{align*}
&\phi^0\left(\bl^* + \frac{\br( \by^k  )}{\|\br( \by^k  )\|}\right) \\
&~ = \sum\nolimits_{i=1}^N {\rho}\|\ba_i(\bx_i^0-\bx_i^*)\|^2 +\frac{1}{\rho}\left\|\bar{\bl}^0-\bl^* + \frac{\br( \by^k  )}{\|\br( \by^k  )\|}\right\|^2\\
&~ = \sum\nolimits_{i=1}^N {\rho}\|\ba_i(\bx_i^0-\bx_i^*)\|^2 +\frac{2}{\rho}\left(\|\bar{\bl}^0-\bl^*\|^2 + 1\right),
\end{align*}
from which the desired result follows.
\end{proof}

Theorem \ref{thm2} characterizes the suboptimality and infeasibility of the solution obtained when the algorithm is terminated before reaching the optimal solution. That is, the theoretical complexity
for the algorithm to reach an $\epsilon$-optimal solution both in terms of objective value and feasibility
is $O(\frac{1}{\epsilon})$ iterations.
This result is particularly useful for MPC applications where frequent re-optimization for different time horizons is often required in practice, as discussed in Section \ref{subsec:mpc}.
In order to explicitly specify the complexity in advance, however, these bounds require an estimation on the dual optimal solution $\bl^*$.

\section{Certification of Complexity\label{sec:cert}}
In this section, we provide a valid upper bound for $\bl^*$,
which is a corresponding dual multiplier for the optimal solution $\bx^*$ of problem \eqref{P}.

\begin{thm}\label{thm3}
Assume (A1)-(A3). Let $(\bx^*,\bl^*)$ be a primal-dual optimal pair of \eqref{P} and \eqref{D}.
Then,
\begin{align*}
\|\bl^*\| \le \frac{\sqrt{N}G}{\tilde\sigma_{\min}(\ba)},
\end{align*}
where $\tilde\sigma_{\min}(\ba)$ is the smallest nonzero singular value of $\ba$.
\end{thm}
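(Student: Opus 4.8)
The plan is to bound $\norm{\ba^\top\bl^*}$ rather than $\norm{\bl^*}$ directly, since the coupling matrix enters only through $\ba^\top\bl$, and then to convert back via singular values. First I would observe that the Lagrangian $L(\bx,\bl)=F(\bx)+\langle\bl,\ba\bx-\bb\rangle$ depends on $\bl$ only through $\ba^\top\bl$ and $\langle\bl,\bb\rangle$, and that (A2) forces $\ba\bx^*=\bb$, hence $\bb\in\mathrm{range}(\ba)$; therefore the component of $\bl^*$ lying in $\ker(\ba^\top)=\mathrm{range}(\ba)^\perp$ is irrelevant to the saddle-point property, and we may take $\bl^*\in\mathrm{range}(\ba)$ (equivalently, work with the minimum-norm dual optimal multiplier -- which is all Theorem \ref{thm2} needs, since its estimates hold for \emph{every} saddle point). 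For such $\bl^*$, a one-line SVD computation gives $\norm{\ba^\top\bl^*}\ge\tilde\sigma_{\min}(\ba)\,\norm{\bl^*}$, so it suffices to establish $\norm{\ba^\top\bl^*}\le\sqrt{N}\,G$.

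Next I would use the first-order optimality condition contained in (A2): the inequality $L(\bx^*,\bl^*)\le L(\bx,\bl^*)$ for all $\bx\in\X$ says that $\bx^*$ minimizes $\bx\mapsto F(\bx)+\langle\ba^\top\bl^*,\bx\rangle$ over $\X$, so $-\ba^\top\bl^*\in\partial F(\bx^*)+N_{\X}(\bx^*)$, where $N_{\X}(\bx^*)$ is the normal cone of $\X$ at $\bx^*$. Thus $\ba^\top\bl^*=-\bs_{\bx^*}-\bn$ for some $\bs_{\bx^*}\in\partial F(\bx^*)$ and some $\bn\in N_{\X}(\bx^*)$. Since $F=\sum_i f_i$ and $\X=\X_1\times\cdots\times\X_N$ are block-separable, $\bs_{\bx^*}$ is the stack of block subgradients $\bs_{\bx_i^*}\in\partial f_i(\bx_i^*)$, each bounded by $G$ via \eqref{eqn:G}, so by Cauchy--Schwarz $\norm{\bs_{\bx^*}}=\big(\sum_i\norm{\bs_{\bx_i^*}}^2\big)^{1/2}\le\sqrt{N}\,G$. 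If the local constraints are inactive at $\bx^*$ (i.e. $N_{\X}(\bx^*)=\{\0b\}$, the typical case when only the coupling constraints are tight) then $\ba^\top\bl^*=-\bs_{\bx^*}$ and the singular-value step finishes the proof. In general one wants to argue that the normal-cone term does not increase the norm: taking $\bn$ as the projection of $-\bs_{\bx^*}$ onto $-N_{\X}(\bx^*)$, a convex cone containing $\0b$, gives $\norm{\ba^\top\bl^*}=\mathrm{dist}(\bs_{\bx^*},N_{\X}(\bx^*))\le\norm{\bs_{\bx^*}}\le\sqrt{N}\,G$, and combining with the first paragraph yields $\norm{\bl^*}\le\sqrt{N}G/\tilde\sigma_{\min}(\ba)$.

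The step I expect to be the main obstacle is precisely this last one, because for a prescribed dual optimal $\bl^*$ the vector $\bn=-\bs_{\bx^*}-\ba^\top\bl^*$ is \emph{not} free, so one must show that \emph{some} dual optimal multiplier makes the normal-cone part harmless. I would recast this through the value function $p(\bc):=\min\{F(\bx):\ba\bx=\bc,\ \bx\in\X\}$, whose negative subgradients at $\bb$ are exactly the dual optima: the claim becomes that $p$ is locally Lipschitz at $\bb$ along $\mathrm{range}(\ba)$ with constant $\sqrt{N}G/\tilde\sigma_{\min}(\ba)$, which in turn reduces to showing that for every small $\bd\in\mathrm{range}(\ba)$ with $\bb+\bd\in\ba(\X)$ there exists $\bx_{\bd}\in\X$ with $\ba\bx_{\bd}=\bb+\bd$ and $\norm{\bx_{\bd}-\bx^*}\le\norm{\bd}/\tilde\sigma_{\min}(\ba)$ (whence $\abs{p(\bb+\bd)-p(\bb)}\le\sqrt{N}G\,\norm{\bx_{\bd}-\bx^*}$ by Lipschitz continuity of $F$ on $\X$). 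When $\bx^*$ is interior to $\X$ the choice $\bx_{\bd}=\bx^*+\ba^{+}\bd$, with $\ba^{+}$ the pseudoinverse and $\norm{\ba^{+}\bd}\le\norm{\bd}/\tilde\sigma_{\min}(\ba)$, works at once; the boundary case needs the extra observation that such a near-feasible perturbation still exists, which I would handle by exploiting $\ba^\top\bl^*\in\mathrm{range}(\ba^\top)$ together with $N_{\X}(\bx^*)$ being the polar of the blockwise tangent cone of $\X$ at $\bx^*$.
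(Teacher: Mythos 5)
Your third paragraph is, in substance, the paper's own argument: the paper bounds $\|\bl^*\|$ by bounding the subgradients of the value function $\V(\bsd)=\min\{F(\bx):\bx\in\X,\ \ba\bx=\bb+\bsd\}$ at $\bsd=\0b$, using convexity of $\V$ in one direction and a Lipschitz-type estimate $|\V(\bsd)-\V(\0b)|\le\sqrt{N}G\|\bsd\|/\tilde\sigma_{\min}(\ba)$ in the other. So you found the right reduction. Your first two paragraphs (writing $-\ba^\top\bl^*\in\partial F(\bx^*)+N_{\X}(\bx^*)$ and hoping the normal-cone term can be made harmless) do not lead anywhere as written --- the vector in the normal cone is dictated by $\bl^*$, not chosen by projection --- and you correctly say so yourself; they can be dropped.

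The genuine gap is exactly the step you flag at the end and do not close: you must exhibit \emph{some} $\bx_{\bsd}\in\X$ with $\ba\bx_{\bsd}=\bb+\bsd$ and $\|\bx_{\bsd}-\bx^*\|\le\|\bsd\|/\tilde\sigma_{\min}(\ba)$, and when $\bx^*$ lies on the boundary of $\X$ the candidate $\bx^*+\ba^{+}\bsd$ may leave $\X$, or the perturbed problem may be infeasible altogether. This is not a removable technicality. Take $N=1$, $\X=[0,1]$, $\ba=1$, $\bb=1$, $f(x)=-x$, so $G=1$ and $\tilde\sigma_{\min}(\ba)=1$: then $x^*=1$, $\V(\delta)=+\infty$ for $\delta>0$, and the set of multipliers satisfying the saddle-point condition (A2) is the unbounded ray $(-\infty,1]$, so no argument can bound an \emph{arbitrary} dual optimal $\bl^*$ by $\sqrt{N}G/\tilde\sigma_{\min}(\ba)=1$; only a suitably selected multiplier (here $\lambda^*=1$, the one-sided slope of $\V$ from the feasible side) obeys the bound. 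For what it is worth, the paper's proof passes over the same point: from $\ba(\bx^*-\bx_{\epsilon}^*)=-\epsilon\bs/\|\bs\|$ it infers $\|\bx^*-\bx_{\epsilon}^*\|\le\epsilon/\tilde\sigma_{\min}(\ba)$, which holds for the minimum-norm solution of that linear system but not necessarily for the actual minimizer $\bx_{\epsilon}^*$ of the perturbed problem, and it tacitly assumes that problem is feasible. So your diagnosis of where the difficulty sits is accurate, but the proposal does not resolve it; closing it requires either an interiority/regularity assumption on $\bx^*$ relative to $\X$ or restricting the claim to a particular (e.g.\ minimum-norm, range-space) dual multiplier.
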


\begin{proof}
Define a value function $\V:\mathbb{R}^m \to \mathbb{R}$ as
\begin{align*}
\V(\bsd) := \min_{\bx \in \X, \ba\bx=\bb + \bsd} F(\bx).
\end{align*}
By Lagrangian duality, this can be equivalently represented as
\begin{align*}
\V(\bsd) = \max_{\bl\in\mathbb{R}^m} \la \bl, \bb+\bsd\ra + \min_{\bx \in \X} F(\bx) + \la \bl, -\ba\bx \ra.
\end{align*}
Let the function above attain its  value at $ \bl = \bl^*(\bsd)$. Then, $\bl^*(\bsd)\in \partial \V(\bsd)$.
To bound the dual multiplier $\bl^* =\bl^*(\0b)$, therefore, it suffices to show that any vector in $\partial \V(\0b)$ is bounded.
Let $\bs \in \partial \V(\0b)$. Then, from the convexity of $\V(\cdot)$, we have that for any $\epsilon > 0$
\begin{align}\label{L1}
\V\left(\epsilon \frac{\bs}{\|\bs\|}\right) - \V(\0b) \ge \left\la \bs, \epsilon \frac{\bs}{\|\bs\|}\right\ra = \epsilon \|\bs\|.
\end{align}
Let $\bx_{\epsilon}^*$ be defined such that
\begin{align*}
\bx_{\epsilon}^*:= \argmin_{\bx_i \in \X, \ba \bx =\bb+\epsilon \frac{\bs}{\|\bs\|}} F(\bx).
\end{align*}
Then, we have
$\ba(\bx^*-\bx_{\epsilon}^*) = - \epsilon \frac{\bs}{\|\bs\|}$,
 from which we obtain
\begin{align}\label{L2}
\|\bx^*-\bx_{\epsilon}^*\| \le \frac{\epsilon}{\tilde\sigma_{\min}(\ba)},
\end{align}
where $\tilde\sigma_{\min}(\ba)$ is the smallest nonzero singular value of $\ba$.
From \eqref{L2} and \eqref{eqn:G}, we obtain
\begin{align*}
&\left\|\V\left(\epsilon \frac{\bs}{\|\bs\|}\right) - \V(\0b)\right\|
= \|F(\bx_{\epsilon}^*)-F(\bx^*)\| \\
&~~\le G \sum_{i=1}^N\|\bx_{i,\epsilon}^*-\bx^*_i\|
\le  \sqrt{N}G\sqrt{\|\bx_{\epsilon}^*-\bx^*\|^2}
\le \frac{\epsilon\sqrt{N}L}{\tilde\sigma_{\min}(\ba)}.
\end{align*}
In view of this relation and \eqref{L1}, and the fact that $\bs$ represents any arbitrary vector in $\partial \V(\0b)$,
we obtain the desired result.
\end{proof}
\vspace{3pt}

Using the bound above, in the following two propositions,
we provide an explicit number of iterations for the ADAL method
to obtain an $\epsilon$-optimal solution
as well as a selection of the algorithm parameter $\rho$.
Since the bound on the right-hand side of Theorem \ref{thm1} depends on $\bx^*$,
we further upper bound this using relation \eqref{eqn:DX} as
\begin{align}\label{eqn:ubnd1}
\sum\nolimits_{i=1}^N {\rho}\|\ba_i(\bx_i^0\hspace{-0.5mm}-\hspace{-0.5mm}\bx_i^*)\|^2 \hspace{-0.5mm}+\hspace{-0.5mm}\frac{1}{\rho}
\le \rho N\sigma_{\max}^2(\ba)D_{\X}^2 \hspace{-0.5mm}+\hspace{-0.5mm} \frac{1}{\rho},
\end{align}
where we set $\bar \bl^0 = \0b$.

\begin{prop}
Assume (A1)-(A3). Let $\bar \bl^0 = \0b$.
Then, the parameter $\rho^*$ minimizing the bound in \eqref{eqn:ubnd1} is
\begin{align*}
\rho^* = \frac{1}{\sqrt{N}\sigma_{\max}(\ba)D_{\X}}.
\end{align*}
Furthermore, the number of iterations required to decrease the bound \eqref{thm_eq} less than $\epsilon$ is
\begin{align*}
k_{\epsilon,1} = \left\lceil \frac{\sqrt{N}\sigma_{\max}(\ba)D_{\X}}{\epsilon\tau}\right\rceil.
\end{align*}
\end{prop}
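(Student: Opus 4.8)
The plan is to treat both claims as elementary consequences of Theorem~\ref{thm1} together with the bound \eqref{eqn:ubnd1}. For the first claim I would set $h(\rho) := \rho N\sigma_{\max}^2(\ba)D_{\X}^2 + \frac{1}{\rho}$ on $\rho\in(0,\infty)$, which is precisely the right-hand side of \eqref{eqn:ubnd1}. This $h$ is strictly convex (its second derivative is $2/\rho^3>0$), so it has a unique minimizer, obtained from $h'(\rho) = N\sigma_{\max}^2(\ba)D_{\X}^2 - 1/\rho^2 = 0$, namely $\rho^* = 1/(\sqrt{N}\sigma_{\max}(\ba)D_{\X})$; equivalently one can invoke AM--GM, $a\rho + b/\rho \ge 2\sqrt{ab}$ with equality iff $\rho=\sqrt{b/a}$. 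In either case the minimal value is $h(\rho^*) = 2\sqrt{N}\sigma_{\max}(\ba)D_{\X}$.

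For the second claim I would first note that the hypothesis $\bar{\bl}^0 = \0b$ makes the constant $(\|\bar{\bl}^0\|+1)^2$ in the definition of $\phi$ from Theorem~\ref{thm1} equal to $1$, so $\phi = \sum_{i=1}^N \rho\|\ba_i(\bx_i^0-\bx_i^*)\|^2 + \frac{1}{\rho}$. By \eqref{eqn:ubnd1} this is at most $h(\rho)$, and evaluating at $\rho=\rho^*$ gives $\phi \le 2\sqrt{N}\sigma_{\max}(\ba)D_{\X}$. Substituting into \eqref{thm_eq},
\begin{align*}
F(\by^k) - F(\bx^*) + \|\br(\by^k)\| ~\le~ \frac{\phi}{2k\tau} ~\le~ \frac{\sqrt{N}\sigma_{\max}(\ba)D_{\X}}{k\tau}.
\end{align*}
Requiring the last expression to be at most $\epsilon$ is equivalent to $k \ge \sqrt{N}\sigma_{\max}(\ba)D_{\X}/(\epsilon\tau)$, and since $k$ is a positive integer the smallest admissible value is $k_{\epsilon,1} = \lceil \sqrt{N}\sigma_{\max}(\ba)D_{\X}/(\epsilon\tau)\rceil$, which is the claimed bound.

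I do not expect a genuine obstacle here: the argument is a one-variable convex minimization followed by a substitution into an already-established rate. The only points needing care are bookkeeping ones: using $\bar{\bl}^0=\0b$ consistently so that the dual-initialization constant in $\phi$ coincides with the ``$+1/\rho$'' term that \eqref{eqn:ubnd1} has absorbed, and recalling that $h(\rho^*)$ minimizes the \emph{upper bound} on $\phi$ rather than $\phi$ itself, so that $k_{\epsilon,1}$ is a valid (if conservative) worst-case certificate rather than a tight count.
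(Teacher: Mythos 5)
Your proposal is correct and follows essentially the same route as the paper: minimize the convex function $\rho\mapsto \rho N\sigma_{\max}^2(\ba)D_{\X}^2+1/\rho$ to get $\rho^*$ and the minimal value $2\sqrt{N}\sigma_{\max}(\ba)D_{\X}$, then substitute into the rate $\phi/(2k\tau)$ from Theorem~\ref{thm1} and solve for $k$. Your added remarks (the explicit derivative/AM--GM check and the caveat that $\rho^*$ minimizes the upper bound on $\phi$ rather than $\phi$ itself) are accurate but not a different method.
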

\begin{proof}
Note that the right-hand side of relation \eqref{eqn:ubnd1} is convex with respect to $\rho$.
Therefore, it is easy to see that the parameter $\rho$ which minimizes the right-hand side can be chosen as
$\rho^* = \frac{1}{\sqrt{N}\sigma_{\max}(\ba)D_{\X}}$.
By using this parameter for the bound in Theorem \ref{thm1}, we obtain
\begin{align*}
F(\by^k) -  F(\bx^*) + \|\br( \by^k)\| ~\leq~ \frac{1}{k\tau} \sqrt{N}\sigma_{\max}(\ba)D_{\X},
\end{align*}
from which the desired result follows.
\end{proof}
\vspace{3pt}
This result shows that the number of required iterations depends on the number of network agents,
the diameter of the constraint set $\X$,
the maximum singular value of $\ba$, and
the sparsity of the matrix $\ba$, which is encoded in the parameter $\tau \in (0,1/q)$ (cf. Eq. \eqref{eqn:q}).

Similarly, from relation \eqref{eqn:DX} and Theorem \ref{thm3}, the right-hand side of Theorem \ref{thm2}(a), which is larger than that of Theorem \ref{thm2}(b), can be further upper bounded as
\begin{align}\label{eqn:ubnd2}
&\sum\nolimits_{i=1}^N {\rho}\|\ba_i(\bx_i^0-\bx_i^*)\|^2 +\frac{4}{\rho}\|\bl^*\|^2\\
&~~\le \rho N\sigma_{\max}^2(\ba)D_{\X}^2 + \frac{4}{\rho}\frac{NG^2}{\tilde\sigma_{\min}^2(\ba)},\nonumber
\end{align}
where we set $\bar \bl^0 = \0b$.

\begin{prop}
Assume (A1)-(A3). Let $\bar \bl^0 = \0b$.
Then, the parameter $\rho^*$ minimizing the bound in \eqref{eqn:ubnd2} is
\begin{align*}
\rho^* = \frac{2G}{\tilde\sigma_{\min}(\ba)\sigma_{\max}(\ba)D_{\X}}.
\end{align*}
Furthermore, the number of iterations required to obtain an $\epsilon$-optimal and feasible solution is
\begin{align*}
k_{\epsilon,2} =\left\lceil  \frac{2GN\sigma_{\max}(\ba)D_{\X}}{\epsilon\tau\tilde\sigma_{\min}(\ba)}
\right\rceil.
\end{align*}
\end{prop}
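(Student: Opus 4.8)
The plan is to follow the same two-step recipe used in the preceding proposition: first minimize the explicit upper bound over the free parameter $\rho$, and then translate the minimized bound into an iteration count via Theorem~\ref{thm2}.

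First I would treat the right-hand side of \eqref{eqn:ubnd2} as a one-variable function
\begin{align*}
h(\rho) = a\rho + \frac{b}{\rho}, \qquad a = N\sigma_{\max}^2(\ba)D_{\X}^2, \quad b = \frac{4NG^2}{\tilde\sigma_{\min}^2(\ba)},
\end{align*}
which is strictly convex on $(0,\infty)$ because $a>0$ and $b>0$. Solving $h'(\rho) = a - b/\rho^2 = 0$ gives the unique minimizer $\rho^* = \sqrt{b/a} = \frac{2G}{\tilde\sigma_{\min}(\ba)\sigma_{\max}(\ba)D_{\X}}$ and the optimal value $h(\rho^*) = 2\sqrt{ab} = \frac{4NG\sigma_{\max}(\ba)D_{\X}}{\tilde\sigma_{\min}(\ba)}$. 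Since the algorithm only constrains $\tau\in(0,1/q)$ and leaves $\rho$ free, this $\rho^*$ is admissible.

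Next I would substitute $\rho=\rho^*$ into Theorem~\ref{thm2}. As noted in the text preceding \eqref{eqn:ubnd2}, the right-hand side of part~(a) dominates that of part~(b), and \eqref{eqn:ubnd2}---obtained by combining the diameter bound \eqref{eqn:DX} (with $\bar\bl^0 = \0b$) with the estimate $\|\bl^*\| \le \sqrt{N}G/\tilde\sigma_{\min}(\ba)$ of Theorem~\ref{thm3}---is an upper bound for $\max\{\phi^0(\0b),\phi^0(2\bl^*)\}$. Hence both $|F(\by^k)-F(\bx^*)|$ and $\|\br(\by^k)\|$ are at most $\frac{1}{2k\tau}h(\rho^*) = \frac{2NG\sigma_{\max}(\ba)D_{\X}}{k\tau\tilde\sigma_{\min}(\ba)}$. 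Requiring this quantity to be $\le\epsilon$ and solving for $k$ yields $k \ge \frac{2NG\sigma_{\max}(\ba)D_{\X}}{\epsilon\tau\tilde\sigma_{\min}(\ba)}$, and taking the ceiling gives the stated $k_{\epsilon,2}$.

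I do not expect a genuine obstacle; the only care needed is bookkeeping: checking that the unconstrained minimizer $\rho^*$ is admissible for the algorithm, and making sure the chain of inequalities running from Theorem~\ref{thm2}(a) through \eqref{eqn:ubnd2} and Theorem~\ref{thm3} is applied in the correct direction, so that $k_{\epsilon,2}$ is a \emph{sufficient} iteration count rather than merely a necessary one.
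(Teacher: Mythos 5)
Your proposal is correct and follows essentially the same route as the paper: minimize the convex bound $a\rho + b/\rho$ in \eqref{eqn:ubnd2} to get $\rho^*=\sqrt{b/a}$, substitute the optimal value $2\sqrt{ab}=\frac{4NG\sigma_{\max}(\ba)D_{\X}}{\tilde\sigma_{\min}(\ba)}$ into the Theorem~\ref{thm2}(a) bound, and solve $\frac{1}{2k\tau}\cdot 2\sqrt{ab}\le\epsilon$ for $k$. The only difference is that you spell out the calculus and the admissibility of $\rho^*$, which the paper leaves implicit.
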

\begin{proof}
Since the right-hand side of relation \eqref{eqn:ubnd2} is convex with respect to $\rho$,
it is easy to see that the parameter $\rho$ which minimizes the right-hand side can be chosen as
$\rho^* = \frac{2G}{\tilde\sigma_{\min}(\ba)\sigma_{\max}(\ba)D_{\X}}$.
By using this parameter for the bound in Theorem \ref{thm2}(a), we obtain
\begin{align*}
|F(\by^k) -  F(\bx^*)| ~\leq~ \frac{1}{2k\tau} \frac{4GN\sigma_{\max}(\ba)D_{\X}}{\tilde\sigma_{\min}(\ba)},
\end{align*}
from which the desired result follows.
\end{proof}
\vspace{3pt}

As expected, $k_{\epsilon,2} \ge k_{\epsilon,1}$ since the conditions imposed by Theorem \ref{thm2} are more strict.
More specifically, due to the dependence of the bounds on the optimal dual multiplier $\bl^*$, $k_{\epsilon,2}$ also depends on the Lipschitz constant $G$.

\section{Conclusions}\label{sec_concl}

In this paper, we presented an Augmented Lagrangian decomposition method (ADAL)
and characterized its computational complexity. We showed that the algorithm generates an $\epsilon$-optimal and feasible solution using the ergodic average of the sequence of primal variables under some mild assumptions such as the general convexity of the problems.
We also provided an explicit upper bound on the optimal dual multiplier, from which the number of iterations can be explicitly given for any general convex problems involving linear constraints.
The results in this paper have the potential to significantly improve the performance of distributed MPC problems,
where preconditioning of computational complexity is important.

\appendix

\subsection{Proof of Lemma \ref{lemma3}}\label{app:B}
Let $\bs_i^k$ be a subgradient of $f_i$ at $\hat{\bx}_i^k$, i.e., $ \bs_i^k \in \partial f_i(\hat{\bx}_i^k)$.
Then, the first order optimality conditions \cite[Proposition 4.7.1]{Berts1} for each local problem \eqref{ADAL_1} imply that for any $\bx_i \in \mathcal{X}_i$
\begin{align*}
0\leq  \Big\langle \bs_i^k  +  \ba_i^\top \Big[ \bl^k + \rho  \big(\ba_i\hat{\bx}_i^k + \sum_{j\neq i}\ba_j \bx_j^k -\bb\big)\Big], \bx_i-\hat{\bx}_i^k  \Big\rangle .
\end{align*}
By letting $\bx_i = \bx_i^*$ and substituting $\bl^k$ with $\hat{\bl}^k:=\bl^k+\rho \br(\hat{\bx}^k)$ in the above, we get
\begin{align}\label{1}
0\leq  \Big\langle \bs_i^k  +  \ba_i^\top \Big[ \hat{\bl}^k + \rho  \sum_{j\neq i}\ba_j ( \bx_j^k - \hat{\bx}_j^k )\Big], \bx_i^*-\hat{\bx}_i^k  \Big\rangle .
\end{align}
By the definition of $\bs_i^k$, we have the relation
\begin{align}\label{subgr}
f_i(\bx_i^*) - f_i(\hat{\bx}_i^k) ~\geq~ \left\langle \bs_i^k, \bx_i^* - \hat{\bx}_i^k \right\rangle.
\end{align}
Substituting this into \eqref{1}, we get
\begin{align*}
& f_i(\bx_i^*) - f_i(\hat{\bx}_i^k) ~+~  \Big\langle \hat{\bl}^k , \ba_i \big(\bx_i^*-\hat{\bx}_i^k \big) \Big\rangle  \\
& \qquad +~ \rho \Big\langle \ba_i\big(  \bx_i^*-\hat{\bx}_i^k \big) ,  \sum\nolimits_{j\neq i} \ba_j \big( \bx_j^{k} - \hbx_j^k\big) \Big\rangle   ~\geq~ 0.\notag
\end{align*}
Summing over all $i$, we get
\begin{align*}
& F(\bx^*) - F(\hat{\bx}^k) ~+~  \Big\langle \hat{\bl}^k , \sum\nolimits_i\ba_i \big(\bx_i^*-\hat{\bx}_i^k \big) \Big\rangle  \\
&~+~ \rho \sum\nolimits_i \Big\langle \ba_i\big(\bx_i^*-\hat{\bx}_i^k\big) , \sum\nolimits_{j\neq i}\ba_j \big( \bx_j^k- \hat{\bx}_j^k \big)\Big\rangle  ~\geq~ 0.
\end{align*}
Substituting $ \sum_i \ba_i\big(\bx_i^*-\hat{\bx}_i^k\big) =  \bb - \sum_i \ba_i  \hat{\bx}_i^k = -\br(\hbx^k)$, adding and subtracting $\langle\bl, \br(\hbx^k) \rangle$, and rearranging terms in the above inequality  we get
\begin{align*}
&   -\Big\langle \hat{\bl}^k-\bl , \br(\hbx^k) \Big\rangle   ~\geq~  F(\hat{\bx}^k) - F(\bx^*) + \langle\bl, \br(\hbx^k) \rangle\\
&\qquad+~ \rho \sum\nolimits_{i} \Big\langle \ba_i\big(\hat{\bx}_i^k - \bx_i^*\big) , \sum\nolimits_{j\neq i}\ba_j \big( \bx_j^k- \hat{\bx}_j^k \big)\Big\rangle. \notag
\end{align*}
To avoid cluttering the notation, we temporarily disregard the term $F(\hat{\bx}^k) - F(\bx^*) + \langle\bl, \br(\hbx^k) \rangle$, i.e., we consider only the terms
\begin{align*}
&   -\Big\langle \hat{\bl}^k-\bl , \br(\hbx^k) \Big\rangle   ~\geq~ \\
&\qquad\rho \sum\nolimits_{i} \Big\langle \ba_i\big(\hat{\bx}_i^k - \bx_i^*\big) , \sum\nolimits_{j\neq i}\ba_j \big( \bx_j^k- \hat{\bx}_j^k \big)\Big\rangle. \notag
\end{align*}
Add the term ${\rho}\sum_i\big\langle \ba_i( \hbx_i^k -\bx_i^*) , \ba_i(\bx_i^k-\hbx_i^k) \big\rangle$ to both sides of the above inequality, and group the terms on the right-hand side by their common factor to get
\begin{align}\label{2}
& {\rho}\sum\nolimits_i\Big\langle \ba_i( \hbx_i^k -\bx_i^*) , \ba_i(\bx_i^k-\hbx_i^k) \Big\rangle - \Big\langle \hat{\bl}^k-\bl ,  \br(\hbx^k) \Big\rangle \nonumber\\
&  \geq~ {\rho}\sum\nolimits_i\Big\langle \ba_i( \hbx_i^k -\bx_i^*) , \sum\nolimits_j \ba_j(\bx_j^k-\hbx_j^k) \Big\rangle.\notag\\
& =~ {\rho}\Big\langle \br(\hbx^k)  , \br(\bx^k) - \br(\hbx^k) \Big\rangle,
\end{align}
where the last equality is from $\sum_j\ba_j(\bx_j^k-\hat{\bx}_j^k) = \br(\bx^k)-\br(\hat{\bx}^k)$.
Next, we represent
\begin{gather*}
\ba_i\hat{\bx}_i^k-\ba_i\bx_i^* = (\ba_i \bx_i^k-\ba_i \bx_i^*)+(\ba_i\hat{\bx}_i^k-\ba_i\bx_i^k) ~~\text{and}\\
{\hat{\bl}^k-\bl = (\bl^k-\bl)+(\hat{\bl}^k-\bl^k)} = (\bl^k-\bl) + \rho \br(\hbx^k),
\end{gather*}
in the left-hand side of \eqref{2} to obtain
\begin{align*}
& {\rho}\sum\nolimits_i\Big\langle \ba_i( \bx_i^k -\bx_i^*) , \ba_i(\bx_i^k-\hbx_i^k) \Big\rangle - \Big\langle \bl^k-\bl ,  \br(\hbx^k) \Big\rangle  \notag\\
& ~~~~\geq~  {\rho} \sum\nolimits_i \|\ba_i(\bx_i^{k}-\hat{\bx}_i^k)\|^2 ~+~ {\rho}\| \br(\hbx^k) \|   ^2 \\
& \qquad\qquad\qquad\qquad~~~+~ {\rho}\Big\langle \br(\hbx^k)  , \br(\bx^k) - \br(\hbx^k) \Big\rangle.  \nonumber
\end{align*}
Adding $ -(1-\tau)\rho\big\langle \br(\bx^k),\br(\hat{\bx}^k)\big\rangle$ to both sides of the above inequality and recalling the definition of $\bar{\bl}^k$ in \eqref{lambda_bar}, we get
\begin{align}\label{4}
& {\rho}\sum\nolimits_i\Big\langle \ba_i( \bx_i^k -\bx_i^*) , \ba_i(\bx_i^k-\hbx_i^k) \Big\rangle - \Big\langle \bar{\bl}^k-\bl ,  \br(\hbx^k) \Big\rangle  \notag\\
&~ \geq~  {\rho} \sum\nolimits_i \|\ba_i(\bx_i^{k}-\hat{\bx}_i^k)\|^2 ~+~ {\rho}\| \br(\hbx^k) \|   ^2 \\
&  ~~~+~ {\rho}\Big\langle \br(\hbx^k)  , \br(\bx^k) - \br(\hbx^k) \Big\rangle  -(1-\tau)\rho\Big\langle \br(\bx^k),\br(\hat{\bx}^k)\Big\rangle.  \nonumber
\end{align}
Considering only the last two terms on the right hand side of \eqref{4}, we can write
\begin{align}\label{6}
& {\rho}\Big\langle \br(\hbx^k)  , \br(\bx^k) - \br(\hbx^k) \Big\rangle  - (1-\tau)\rho\Big\langle \br(\bx^k),\br(\hat{\bx}^k)\Big\rangle  \nonumber\\
&=~{\rho}\Big\langle \br(\hbx^k)  , \br(\bx^k) - \br(\hbx^k) \Big\rangle   \\\nonumber
&\qquad \qquad - (1-\tau)\rho\Big\langle \br(\bx^k)- \br(\hbx^k) +\br(\hbx^k),\br(\hat{\bx}^k)\Big\rangle  \\\nonumber
&=~ \tau\rho\Big\langle \br(\hbx^k)  , \br(\bx^k) - \br(\hbx^k) \Big\rangle  -(1-\tau)\rho\|\br(\hat{\bx}^k)\|^2.
\end{align}
%
%
%
We now consider the last term of the above equality. Each one of the summands in this term is bounded below by
\begin{align*}
& \tau\rho\Big\langle \br(\hbx^k)  ,  \ba_i(\bx_i^k-\hat{\bx}_i^k) \Big\rangle = \tau\rho\sum_{j=1}^m  \big[\br(\hbx^k)\big]_j  \big[ \ba_i( \bx_i^k-\hat{\bx}_i^k )\big]_j \\
&~\geq~ -\frac{1}{2} \sum\nolimits_{j=1}^m \Big(\rho\Big[\ba_i(\bx_i^k-\hat{\bx}_i^k)\Big]_j^2 ~+~ \tau^2\rho\big[\br(\hbx^k)\big]_j^2\Big).
\end{align*}
Note, however, that some of the rows of $\ba_i$ might be zero. If $[\ba_i]_j = \mathbf{0}$, then it follows that $ \big[\br(\hbx^k)\big]_j  \big[ \ba_i( \bx_i^k-\hat{\bx}_i^k )\big]_j = 0$.   Hence,  denoting the set of nonzero rows of $\ba_i$ as $\mathcal{Q}_i$, i.e., $\mathcal{Q}_i = \{ j = 1,\dots,m : [\ba_i ]_j \neq \mathbf{0} \}$, we can obtain a tighter lower bound for each $\tau\rho\Big\langle \br(\hbx^k)  ,  \ba_i(\bx_i^k-\hat{\bx}_i^k) \Big\rangle$ term as
\begin{align*}
&\tau\rho\Big\langle \br(\hbx^k)  ,  \ba_i(\bx_i^k-\hat{\bx}_i^k) \Big\rangle  ~\geq~ \\
&-\frac{1}{2} \sum\nolimits_{j\in\mathcal{Q}_i} \Big(\rho\Big[\ba_i(\bx_i^k-\hat{\bx}_i^k)\Big]_j^2 ~+~ {\tau^2}{\rho}\big[\br(\hbx^k)\big]_j^2\Big).\notag
\end{align*}
Recalling that $q$ denotes the maximum number of non-zero blocks $[ \ba_i ]_j$ over all $j$, and summing inequality \eqref{6} over all $i$, we observe that each quantity $[\br(\hbx^k)]_j^2$ is included in the summation at most $q$ times. This leads us to the bound
\begin{align}\label{7}
& \tau\rho \Big\langle \br(\hbx^k)  ,  \br(\bx^k) - \br(\hbx^k) \Big\rangle  \notag\\
&\quad ~=~  \sum\nolimits_i\tau\rho\Big\langle \br(\hbx^k)  ,  \ba_i(\bx_i^k-\hat{\bx}_i^k) \Big\rangle\\
& \quad~\geq~ -  \frac{\rho}{2}\sum\nolimits_i  \| \ba_i(\bx_i^k-\hat{\bx}_i^k) \|^2 ~-~ \frac{\tau^2q\rho}{2}\| \br(\hbx^k) \|^2 .\notag
\end{align}
Substituting \eqref{6}-\eqref{7} back into \eqref{4}, we arrive at
\begin{align}\label{8}
& {\rho}\sum\nolimits_i\Big\langle \ba_i( \bx_i^k -\bx_i^*) , \ba_i(\bx_i^k-\hbx_i^k) \Big\rangle - \Big\langle \bar{\bl}^k-\bl ,  \br(\hbx^k) \Big\rangle  \notag\\
& \geq  \frac{\rho}{2} \sum\nolimits_i \|\ba_i(\bx_i^{k}-\hat{\bx}_i^k)\|^2 + \rho(\tau-\frac{\tau^2q}{2}) \| \br(\hbx^k) \|   ^2.
\end{align}
Recall that until now we have disregarded the term $F(\hat{\bx}^k) - F(\bx^*) + \langle\bl, \br(\hbx^k) \rangle$. Reinstating this term in \eqref{8}, we get
\begin{align}\label{lemma3_eq}
&F(\hbx^k) -  F(\bx^*) + \langle \bl, \br(\hbx^k)\rangle \\
&\leq~
{\rho}\sum\nolimits_i\Big\langle \ba_i( \bx_i^k -\bx_i^*) , \ba_i(\bx_i^k-\hbx_i^k) \Big\rangle -  \Big\langle \bar{\bl}^k-\bl ,  \br(\hbx^k) \Big\rangle \notag\\
&    - \frac{\rho}{2} \sum\nolimits_i \|\ba_i(\hat{\bx}_i^k-\bx_i^{k})\|^2 - \rho(\tau-\frac{\tau^2 q}{2})\| \br(\hbx^k) \|^2.\notag
\end{align}

We now represent the right-hand side of the desired result using the definition of $\bar\bl^k$ in (\ref{lambda_bar}) and Lemma \ref{lem:lbar}. For all $k$,
we have:
\begin{align*}
& \phi^k(\bl) - \phi^{k+1}(\bl)  =  \sum\nolimits_{i=1}^N {\rho}\|\ba_i(\bx_i^k-\bx_i^*)\|^2 +\frac{1}{\rho}\|\bar{\bl}^k-\bl\|^2 \notag\\
& \qquad - \sum\nolimits_{i=1}^N {\rho}\|\ba_i(\bx_i^{k+1}-\bx_i^*)\|^2 -\frac{1}{\rho}\|\bar{\bl}^{k+1}-\bl\|^2\notag\\
&=2\tau \bigg[ {\rho}\sum\nolimits_i\Big\langle \ba_i( \bx_i^k -\bx_i^*) , \ba_i(\bx_i^k-\hbx_i^k) \Big\rangle - \notag\\
&  \Big\langle \bar{\bl}^k-\bl ,  \br(\hbx^k) \Big\rangle \bigg] - \tau^2 \bigg[\sum_i {\rho}\|\ba_i(\hat{\bx}_i^k-\bx_i^{k})\|^2 + {\rho}\| \br(\hbx^k) \|^2\bigg].\notag
\end{align*}
Rearranging terms in the above equation, we get that
\begin{align*}
&\frac{1}{2\tau}\big( \phi^k(\bl) - \phi^{k+1}(\bl) \big) \\
& \ge {\rho}\sum\nolimits_i\Big\langle \ba_i( \bx_i^k -\bx_i^*) , \ba_i(\bx_i^k-\hbx_i^k) \Big\rangle -  \Big\langle \bar{\bl}^k-\bl ,  \br(\hbx^k) \Big\rangle \notag\\
&    - \frac{\rho}{2} \sum\nolimits_i \|\ba_i(\hat{\bx}_i^k-\bx_i^{k})\|^2 - \rho(\tau-\frac{\tau^2 q}{2})\| \br(\hbx^k) \|^2   ,\notag
\end{align*}
where the last inequality follows from  $\tau \in (0,\frac{1}{q})$. Recall that $\tau$ is the stepsize parameter used in the second step of ADAL (cf. Eq. \eqref{ADAL_2}). Therefore, combining this with \eqref{lemma3_eq}, we arrive at the desired result.

\bibliographystyle{IEEEtran}
\bibliography{references1,references2}
\end{document}